\newtheorem{theorem}{Theorem}[section]
\theoremstyle{definition}
\newtheorem{definition}[theorem]{Definition}
\newtheorem{example}[theorem]{Example}
\newtheorem{prop}[theorem]{Proposition}
\theoremstyle{remark}
\numberwithin{equation}{section}
\newcommand{\cal}[1]{\mathcal{#1}}
\begin{document}

\title{Counting Permutations that Avoid Many Patterns}
\author{Yonah BIERS-ARIEL, Haripriya CHAKRABORTY, John CHIARELLI, Bryan EK, Andrew LOHR, Jinyoung PARK, Justin SEMONSEN, Richard VOEPEL, Mingjia YANG, Anthony Zaleski, and Doron ZEILBERGER}

\begin{abstract}
This paper presents a collection of experimental results regarding permutation pattern avoidance, focusing on cases where there are ``many'' 
patterns to be avoided. 

\end{abstract}

\maketitle
\tableofcontents

\section{Preamble}
Excellent introductions to the subject of {\it permutation patterns} can be found in \cite{B}, as well as
in the wikipedia entry. In order to make the present article self-contained, let's review the basic
definitions.

A permutation $\pi \in \mathfrak{S}_n$ is said to {\itshape contain a copy} of $\sigma \in \mathfrak{S}_k$ if there is a subsequence of $\pi$ that is order isomorphic to $\sigma$. For example, the permutation $\pi=219378645$ 
contains a copy of $\sigma=1432$, because the subsequence $2975$ 
is order isomorphic to $1432$. 
We call $\sigma$ a {\itshape pattern}, we say that $\pi$ {\itshape avoids} the pattern $\sigma$ if no such subsequence exists, and we define the {\itshape permutation avoidance class} to be $Av_n(\sigma)=\{\pi \in \mathfrak{S}_n \text{ $|$ } \pi \text{ avoids } \sigma \}$. In the case where we wish to avoid an entire set of patterns of arbitrary lengths, say $\Sigma$, we define $Av_n(\Sigma)=\cap_{\sigma \in \Sigma}Av_n(\sigma)$. Finally, we say that two sets of patterns $\Sigma_1$ and $\Sigma_2$ are {\itshape Wilf-equivalent} provided that $|Av_n(\Sigma_1)|=|Av_n(\Sigma_2)|$ for all $n\geq 0$.

\vspace{.75pc}

\section{Pattern Avoidance via Templates}


\vspace{.75pc}

One approach to finding the sizes of permutation avoidance classes is to construct easily enumerated sets and then see if these sets avoid any interesting patterns. In this section, we develop a method of generating sets of permutations using {\bf templates} which both avoid certain patterns, and grow quickly as the lengths of the permutations increase. We will define two kinds of templates, but first will try to motivate their definition with 
a well-known proof of 
the well-known fact that the number of permutations of length $n$ which avoid the pattern 132, a quantity which we will call $B_n$, is equal to $C_n$, then $n^{th}$ Catalan number.

\begin{theorem}\label{123-avoid}
	The number of $132$-avoiding permutations of length $n$ is given by $C_n$.
\end{theorem}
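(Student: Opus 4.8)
The plan is to derive the Catalan recurrence directly from the structure of a $132$-avoiding permutation, and then invoke the recurrence that defines the Catalan numbers. Write $B_n = |Av_n(132)|$ with the conventions $B_0 = 1$ (the empty permutation) and $B_1 = 1$. First I would locate the largest entry: given $\pi \in Av_n(132)$, let $i$ be the position with $\pi_i = n$, so that $\pi = \alpha\, n\, \beta$, where $\alpha = \pi_1 \cdots \pi_{i-1}$ has length $i-1$ and $\beta = \pi_{i+1} \cdots \pi_n$ has length $n-i$.

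The key step, and the only place the hypothesis is used, is the claim that every entry of $\alpha$ exceeds every entry of $\beta$. Indeed, if some entry $\pi_a$ of $\alpha$ were below some entry $\pi_b$ of $\beta$, then $a < i < b$ and $\pi_a < \pi_b < n = \pi_i$, so $\pi_a\, \pi_i\, \pi_b$ would be a copy of $132$, a contradiction. Hence the entries of $\alpha$ are exactly $\{n-i+1, \dots, n-1\}$ and those of $\beta$ are exactly $\{1, \dots, n-i\}$.

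Next I would verify that, once this split is made, avoidance decouples: $\pi$ avoids $132$ if and only if the pattern of $\alpha$ (an element of $\mathfrak{S}_{i-1}$) and the pattern of $\beta$ (an element of $\mathfrak{S}_{n-i}$) both avoid $132$. The forward direction is immediate since pattern containment depends only on relative order. For the converse one checks that $n$ cannot take part in any copy of $132$ (being the global maximum, it could only play the role of the ``$3$'', which the previous paragraph rules out), and that no copy of $132$ can straddle the two blocks, since such a copy would need its first entry below its last entry, impossible once one lies in $\alpha$ and the other in $\beta$. Thus $\pi \mapsto (\alpha,\beta)$ is a bijection between $Av_n(132)$ and $\bigsqcup_{i=1}^{n} Av_{i-1}(132) \times Av_{n-i}(132)$, which gives $B_n = \sum_{i=1}^{n} B_{i-1} B_{n-i}$.

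Finally, since $C_n = \sum_{i=1}^{n} C_{i-1} C_{n-i}$ together with $C_0 = 1$ characterizes the Catalan numbers, a straightforward induction yields $B_n = C_n$ for all $n \ge 0$. I expect the main obstacle to be phrasing the decoupling step cleanly, in particular checking that no forbidden pattern uses the entry $n$ or crosses from $\alpha$ into $\beta$; the rest is the standard Catalan recursion and requires no real computation.
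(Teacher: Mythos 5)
Your proposal is correct and follows essentially the same route as the paper: decompose $\pi$ as $\alpha\, n\, \beta$ at the position of the maximum, show that $132$-avoidance forces every entry of $\alpha$ to exceed every entry of $\beta$ with both blocks avoiding $132$ (and conversely), and deduce the Catalan recurrence $B_n=\sum_{i=1}^n B_{i-1}B_{n-i}$. The only difference is cosmetic: you phrase the decomposition as an explicit bijection onto $\bigsqcup_i Av_{i-1}(132)\times Av_{n-i}(132)$, while the paper argues the same equivalence directly inside its induction.
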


\begin{proof} The proof is by induction. When $n=0$, it is clear that $B_n=1$, so suppose that $B_m=C_m$ for all $m<n$. Consider a length-$n$ permutation $\pi$, and suppose that $n$ appears in position $i$. If $\pi$ avoids 132, it follows that the $i-1$ numbers which proceed $n$ must all be greater than all the $n-i$ numbers which follow $n$, and, moreover, the prefix of $\pi$ formed by the first $i-1$ numbers and the suffix formed by the last $n-i$ numbers must both avoid 132 themselves. Conversely, if these two conditions are met, then $\pi$ avoids 132. Any instance of 132 cannot have the 1 and the 2 on opposite sides of the number $n$ because every number preceding $n$ is greater than every number following it, but any instance of 132 also cannot have the 1 and the 2 on the same side of $n$ because both the prefix preceding $n$ and the suffix following $n$ avoid 132 (and, obviously, neither 1 nor 2 can be represented by $n$). It follows by induction that $B_n=\sum_{i=1}^n B_{i-1}\cdot B_{n-i}$ for all $n\ge 0$; since $B_n$ has the same initial condition as $C_n$ and follows the same recurrence, we conclude that $B_n=C_n$ for all $n\ge 0$.
\end{proof}

In this proof, we showed that every 132-avoiding permutation of length $n$ has the form $LnS$ where $L$ and $S$ are 132-avoiding permutations such that every number in $L$ is larger than every number in $S$. We will generalize this idea in the following definition.

\begin{definition}
A {\bf template} of length $t\ge 1$ is a pair of strings $P$ and $B$ of length $t$. We require that $P$ be a permutation of length $t$ and $B$ be a binary string of length $t$. We will denote the $i^{th}$ element of $P$ by $p_i$ and the $i^{th}$ element of $B$ by $b_i$.
\end{definition}

For every positive integer $n$ and template $T=(P, B)$, we define a set of permutations of length $n$, which we will call $R_{n,T}$, as follows. First, $R_{0,T}$ is the empty string and $R_{1,T}=\{1\}$ regardless of $T$. Then, $R_{n,T}$ is the set of permutations $\pi$ of length $n$ which can be divided into subwords (i.e. strings of consecutive elements of $\pi$) called $W_1,...,W_t$ (with $t=|P|=|B|$) such that if $p_i > p_j$, then every of $W_i$ greater than every element of $W_j$. Moreover, we require that each $W_i$ of length $l$ be an element of $U_{l,T}$, and, if $B_i=0$, then $W_i$ must have exactly one element. If these conditions are met, we say that $W_1,...,W_t$ {\bf fit} the template $T$, so a permutation of length $n$ is an element of $R_{n,T}$ if it can be decomposed into subwords which fit $T$. We now provide an example of the set of a template.

\begin{example}
Let $T=(231,101)$; then $R_{1,T}=\{1\}, R_{2,T}=\{12,21\},$ and $R_{3,T}=\{123,213,231,312,321\}$. To find the elements of $R_{3,T}$ we consider a permutation $\pi$ of length 3 and divide it up into subwords $W1,W2, W3$. We know that $W2$ is the string 3, and so we can choose $W1 \in R_{2,T}$ and $W3$ empty, $W3 \in R_{2,T}$ and $W1$ empty, or $W1,W3 \in R_{1,T}$. Because $|R_{2,T}|=2$, each of the first two options gives two distinct permutations in $R_{3,T}$ (123, 213, 312, and 321), while the last option gives one permutation (231). Note that $R_{3,T}$ is exactly the set of length 3 permutations which avoid 132. In fact $R_{n,T}$ is the set of length $n$ permutations which avoid 132; this fact can be checked by reviewing the proof of Theorem \ref{123-avoid}. Therefore, considering sets corresponding to templates does generalize the argument of Theorem \ref{123-avoid}.
\end{example}

\begin{example}
Let $T=(2,1,3,5,4)$; then $R_{1,T}=\{1\}, R_{2,T}=\{12\}, R_{3,T}=\{123,132,213\}, R_{4,T}=\{1234,1243,1423,2134,2143,2314\}$.
\end{example}

Once we begin looking at permutations with length greater than 3 it becomes much harder (and likely impossible) to find templates which produce entire pattern avoidance classes. However, it is not too difficult to find templates which produce only permutations avoiding some set of patterns, which is to say subsets of pattern avoidance classes. Therefore, looking at templates lets us find lower bounds on the size of certain avoidance classes. The following proposition shows an application of this method.

\begin{prop}
Let $Q_n$ be the set of all permutations of $n$ which avoid every element of $\{2143, 2413, 3142\}$ and let $q_n=|Q_n|$. Then, if the sequence $(r_n)_{n=0}^\infty$ is defined by $r_0=r_1=1$, and $r_n= \sum_{i=1}^{n-1}\sum_{j=i+1}^n r_{i-1}r_{j-i-1}r_{n-j}$ for $n>1$, it holds that $q_n \ge r_n$ for all $n$.
\end{prop}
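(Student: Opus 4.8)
\emph{Proposal.} The plan is to realize a large, easily-counted subset of $Q_n$ as the set $R_{n,T}$ associated to a carefully chosen template $T=(P,B)$ of length $5$ whose binary string $B$ has exactly two zeros, and then to combine the inclusion $R_{n,T}\subseteq Q_n$ with the identity $|R_{n,T}|=r_n$. A template that works is $T=(51423,\,10101)$. With this choice, every $\pi\in R_{n,T}$ with $n\ge 2$ is built from blocks $W_1\,x\,W_3\,y\,W_5$ (in position order), where $x$ and $y$ are single entries --- the blocks forced to be singletons by the two zeros of $B$ --- while $W_1,W_3,W_5$ are again members of the corresponding sets $R_{\bullet,T}$; moreover $P=51423$ forces the value ordering $x<y<W_5<W_3<W_1$, so $x$ and $y$ carry the two smallest values of $\pi$, the block $W_1$ carries the largest values, and among $W_1,W_3,W_5$ earlier in position means larger in value.

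First I would establish $|R_{n,T}|=r_n$. The base cases $R_{0,T}$ and $R_{1,T}$ give $1=r_0$ and $1=r_1$. For $n\ge 2$, the value ordering makes $x$ the position of the entry $1$ and $y$ the position of the entry $2$ in $\pi$, with $y$ necessarily to the right of $x$; hence $\pi$ determines its decomposition, and $\pi\mapsto(W_1,W_3,W_5)$ is a bijection from $R_{n,T}$ onto the set of triples with $W_1\in R_{a,T}$, $W_3\in R_{b,T}$, $W_5\in R_{c,T}$, $a+b+c=n-2$. By induction this gives $|R_{n,T}|=\sum_{a+b+c=n-2}r_a r_b r_c$, which becomes the stated double sum under the substitution $a=i-1,\ b=j-i-1,\ c=n-j$.

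Next I would prove $R_{n,T}\subseteq Q_n$ by induction on $n$, the cases $n\le 3$ being automatic. For $n\ge 4$, suppose $\pi=W_1\,x\,W_3\,y\,W_5\in R_{n,T}$ contains a copy of some $\sigma\in\{2143,2413,3142\}$ at positions $q_1<q_2<q_3<q_4$. Two structural facts drive the argument: (i) each of $2143,2413,3142$ is skew-indecomposable, and $\pi$ restricted to $W_1\cup W_3\cup W_5$ is the skew sum $W_1\ominus W_3\ominus W_5$ (every entry of $W_1$ above and left of every entry of $W_3$, likewise for $W_3$ and $W_5$), so a copy of $\sigma$ avoiding both $x$ and $y$ would have to lie inside a single $W_i$, contradicting the inductive hypothesis; and (ii) $x$ and $y$ are the two globally smallest entries, $x$ precedes $y$, and $W_1$ holds the largest entries. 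Using (ii), one dispatches the three remaining cases --- the copy uses exactly $x$, uses exactly $y$, or uses both --- by observing that $x$ and $y$ can only play the roles of the two minimal entries of $\sigma$, which then pins down where the other entries of the copy must sit and collides with the value constraint every time. Granting these points, $q_n=|Q_n|\ge|R_{n,T}|=r_n$.

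The main obstacle is locating a template that simultaneously produces only $\{2143,2413,3142\}$-avoiders and admits a unique block decomposition; this is essentially a finite search (presumably done by computer), and once the template is in hand the rest is the case analysis of the inductive step plus routine bookkeeping.
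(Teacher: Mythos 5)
Your proposal is correct --- I checked the points you left implicit: $P=51423$ does force $x<y<W_5<W_3<W_1$; each of $2143$, $2413$, $3142$ is indeed skew-indecomposable, so an occurrence avoiding both singletons must sit inside a single $W_i$ and die by induction; and the remaining cases (the occurrence uses $x$ only, $y$ only, or both) all contradict either the ordering $W_1>W_3>W_5>y>x$ or, for $2143$ and $2413$, the fact that $x$ precedes $y$ in position while in those two patterns the value $2$ precedes the value $1$. Your route is the same in outline as the paper's --- exhibit a length-$5$ template with two forced singleton slots, prove $R_{n,T}\subseteq Q_n$, and count $R_{n,T}$ by the positions of the two singletons --- but the template is genuinely different: the paper takes $T=(45312,10101)$, whose singletons carry the values $n$ and $1$, so its permutations have the shape $W_1\,n\,W_3\,1\,W_5$ with $W_1>W_3>W_5$, whereas yours puts the two \emph{smallest} values in the singleton slots. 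The two templates produce different subsets of $Q_n$ (at $n=4$ yours contains $1234$, the paper's contains $4321$), both of cardinality $r_n$ by the same bookkeeping. What your choice buys is a uniform avoidance argument: skew-indecomposability disposes of all three patterns at once in the ``no singleton used'' case, while the paper argues separately that $2413$ and $3142$ have no nontrivial consecutive-value subwords (so an occurrence is either inside one block or spread over distinct blocks, and $45312$ avoiding both kills the spread case) and then runs a bespoke case analysis for $2143$; the paper itself calls its proof ``complicated and not especially enlightening'' and recommends instead certifying $R_{n,T}\subseteq Q_n$ by computer via Theorem~\ref{single}, which reduces the inclusion to checking $n\le(4-1)(2+1)+1=10$. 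The one thing you should write out in full is the case analysis you compress into ``collides with the value constraint every time'': it does, but the reader needs to see, for instance, that when the copy uses both $x$ and $y$ these must represent the values $1$ and $2$ of $\sigma$ in that positional order, which already rules out $2143$ and $2413$ and leaves only an easy contradiction for $3142$.
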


\begin{proof}
The proof is complicated and not especially enlightening, and Theorem \ref{single} will allow a computer to quickly prove the proposition (the last paragraph of this proof, which is simple and straightforward is still necessary). This proof is included to illustrate the headache that Theorem \ref{single} will help alleviate. The main step of the proof is to show that $Q_n$ contains $R_{n, T}$ where $T=(45312,10101)$. We will show that every permutation in $R_{n,T}$ avoids 2143, 2413, and 3142. First, note that for 2413, and 3142, no proper subword with length greater than 2 contains only consecutive numbers. When we divide a permutation into subwords to fit into the template, each subword must contain only consecutive numbers. Thus we can conclude that if a pattern is present in a permutation in $R_{n,T}$, then it is contained entirely in a single subword or each element is in a different subword. The second case cannot occur because the permutation 45312 avoids both patterns. To see that the first case cannot occur, suppose by way of contradiction that it does, and pick $n$ minimally so that a permutation of $R_{n,T}$ contains one of the two patterns under consideration. When we divide up this permutation into subwords so that it fits into $T$, we must choose some subword to contain pattern, but then this subword is a shorter permutation which contains the pattern, providing a contradiction. This shows that every permutation in $R_{n,T}$ avoids 2413 and 3142.

Next we will see that every permutation also avoids 2143. Again suppose by way of contradiction that there is a permutation in $R_{n,T}$ which contains 2143, and pick $n$ minimally so that this occurs. Then, if we divide up the permutation into 5 subwords, $W_1,...,W_5$, which fit the template $T$ the occurrence of 2143 cannot be contained entirely in any one subword. Therefore, $W_1$ either contains no part of the occurrence, contains the 2, or contains the 21. In the first two cases $W_2$ must not contain any part of the occurrence either; it cannot contain the 2 or 1 because it is the largest element of the permutation. If $W_1$ was empty, then we must fit 2143 into $W_3W_4W_5$, which is impossible because either the 2 will go in $W_3$ even though each element of $W_3$ must be greater than each element of $W_4$ and $W_5$, or else we would need to fit 143 into $W_5$ which can't happen because they are not consecutive integers (the 2 is missing). If $W_1$ contained 2, then $W_2$ must contain 4 and 3 because all the elements of every other subword must be less than the elements of $W_1$. Therefore, the permutations in $R_{n,T}$ avoid 2143, and so $R_{n,T} \subseteq Q_n$.

Now, we just need to show that $|R_{n,T}|=r_n$. First, it follows from the definition of $R_{n,T}$ that $|R_{0,T}|=|R_{1,T}|=1$. Then, for a permutation in $R_{n,T}$, we will say that $n$ occurs at position $i$ and 1 at position $j$. We get that $1\le i \le n-1$ and $i+1 \le j \le n$. Then, $W_1$ can be any of the $r_{i-1}$ elements of $R_{i-1,T}$, $W_3$ can be any of the $r_{j-i-1}$ elements of $R_{j-i-1,T}$, and $W_5$ can be any of the $r_{n-j}$ elements of $R_{n-j,T}$. Therefore, $r_n= \sum_{i=1}^{n-1}\sum_{j=i+1}^n r_{i-1}r_{j-i-1}r_{n-j}$ for $n>1$.
\end{proof}

While this recurrence for $(r_n)$ is reminiscent of the Catalan recurrence, it does not appear to have a similarly nice closed form solution. Fortunately, it is possible to prove results of this kind experimentally without the need for detailed write-ups. The following theorem establishes a sufficient condition for $R_{n,T}$ to avoid a set of patterns which is independent of $n$, and so can be tested for all $n$ at once using a computer.

\begin{theorem}\label{single}
Let $T=(P,B)$ be a template, let $B$ have $k$ 0's, and let $\sigma$ be a pattern of length $l>0$. Then, if there exists an $n$ such that $R_{n,T}$ contains a permutation which has $\sigma$ as a pattern, there also exists an $m \le (l-1)(k+1)+1$ such that $R_{m,T}$ also contains a permutation which has $\sigma$ as a pattern.
\end{theorem}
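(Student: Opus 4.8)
The plan is to prove the statement by strong induction on the pattern length $l$, exploiting the recursive structure of $R_{n,T}$. The base case $l=1$ is immediate: $R_{1,T}=\{1\}$ already contains a copy of the unique pattern of length $1$, and $(l-1)(k+1)+1=1$. For the inductive step, assume the statement for all patterns of length less than $l$ with the same template $T$ (hence the same $k$). If no $R_{n,T}$ contains a copy of $\sigma$ there is nothing to prove, so let $n$ be minimal such that some $\pi\in R_{n,T}$ contains a copy of $\sigma$; since $l\ge 2$ and $R_{1,T}=\{1\}$, we have $n\ge 2$.

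Fix a decomposition $\pi=W_1\cdots W_t$ witnessing $\pi\in R_{n,T}$. We may assume every $W_i$ has length strictly less than $n$: if some $W_i$ had length $n$ then all other blocks would be empty and the clause ``$W_i\in R_{|W_i|,T}$'' would reduce to the circular assertion $\pi\in R_{n,T}$, so no genuine witnessing decomposition is of this form. A fixed copy of $\sigma$ in $\pi$ occupies $l$ positions; let $I\subseteq\{1,\dots,t\}$ be the set of block indices meeting at least one of these positions, and for $i\in I$ let $l_i\ge 1$ count the chosen positions inside $W_i$, so that $\sum_{i\in I}l_i=l$. If $|I|=1$, a single $W_i$ of length $<n$ would contain a copy of $\sigma$, contradicting minimality of $n$; hence $|I|\ge 2$, and in particular $l_i<l$ for every $i\in I$. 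For each $i\in I$, the chosen elements inside $W_i$ are order-isomorphic to some pattern $\sigma^{(i)}$ of length $l_i$, and $W_i\in R_{|W_i|,T}$ contains a copy of $\sigma^{(i)}$; by the inductive hypothesis there is an $m_i\le (l_i-1)(k+1)+1$ and a permutation $V_i\in R_{m_i,T}$ containing a copy of $\sigma^{(i)}$.

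Next I would reassemble these pieces into a short witness for $\sigma$. Set $m:=\sum_{i\in I}m_i+\#\{i\notin I:b_i=0\}$ and build $\pi'$ of length $m$ with blocks $W_i'=V_i$ for $i\in I$, with $W_i'$ a single-element word for $i\notin I$ having $b_i=0$, and with $W_i'$ empty for $i\notin I$ having $b_i=1$; assign the values so the blocks realize the relative order dictated by $P$. (If $i\in I$ has $b_i=0$ then $l_i=1$, forcing $m_i=1$, so the length-one constraint is met.) One then checks that $\pi'\in R_{m,T}$ and that the union of the chosen copies of the $\sigma^{(i)}$ inside the $W_i'$ forms a copy of $\sigma$ in $\pi'$: within a block the relative order is that of $\sigma^{(i)}$ (hence of the corresponding part of $\sigma$), while across two blocks both the left--right order and the value comparison of the chosen elements are governed by $P$ exactly as they were in $\pi$. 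Finally, $\sum_{i\in I}m_i\le(k+1)(l-|I|)+|I|$ and $\#\{i\notin I:b_i=0\}\le k$, so $m\le(k+1)(l-|I|)+|I|+k=(l-1)(k+1)+1-k(|I|-2)\le(l-1)(k+1)+1$ since $|I|\ge 2$, which is the claimed bound.

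I expect the reassembly step to be the main obstacle: one must verify carefully that $\pi'$ satisfies every template constraint (in particular the interplay between the forced $b_i=0$ blocks and the shrunken blocks $V_i$) and that splicing the sub-copies together yields a copy of $\sigma$ itself rather than of some other length-$l$ pattern. A secondary, bookkeeping-type difficulty is tuning the estimates so the final bound comes out as exactly $(l-1)(k+1)+1$; the computation above shows the slack is $k(|I|-2)$, so the bound is tight precisely when every copy of $\sigma$ is forced to straddle exactly two template blocks. A minor point to pin down is the well-foundedness of the recursive definition of $R_{n,T}$, which is what justifies choosing a decomposition with all blocks shorter than $n$.
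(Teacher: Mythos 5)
Your proposal is correct and is essentially the paper's own argument: the paper proves Theorem~\ref{single} by deferring to Theorem~\ref{multi}, whose proof is exactly your induction on $l$ --- decompose the witness into template blocks, force at least two blocks to meet the occurrence of $\sigma$ (you use minimality of $n$ where the paper iterates the decomposition, an equivalent device), shrink each block by the inductive hypothesis, pad the $b_i=0$ slots, and conclude with the same bound $m\le (k+1)l-k|I|+k\le (l-1)(k+1)+1$ from $|I|\ge 2$. No substantive differences.
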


\begin{proof}
This Theorem will be an immediate corollary of Theorem \ref{multi}, and, while it can be proved separately, the proof is almost identical to that of Theorem \ref{multi}, so we omit it.
\end{proof}

With this result in hand, our laptop was able to prove Proposition 4 in 16 seconds using Maple. There is no particular reason to consider templates just one at a time. Analogously to how we originally defined templates, we define the set of length $n$ permutations corresponding to the set of templates ${\mathcal {T}}=\{T_1,...,T_r\}$. We will call this set of permutations $S_{n, \mathcal{T}}$, and define it recursively as follows. First, $S_{0,\mathcal T}$ is the empty string and $S_{1,\mathcal T}=\{1\}$ regardless of $\mathcal T$. Then, $S_{n,\mathcal T}$ is the set of permutations $\pi$ of length $n$ such that, for some $T=(P,B) \in \mathcal T$, we can divide $\pi$ into subwords $W_1,...,W_t$ such that if $p_i > p_j$, then every element of $W_i$ greater than every element of $W_j$. Moreover, we require that each $W_i$ of length $l$ be an element of $S_{l,\mathcal T}$ (rather than of $R_{l,T}$), and, if $B_i=0$, then $W_i$ must have exactly one element. We will finish this section by proving a generalization of Theorem \ref{single} for sets of templates, and giving an example of its application.

\begin{theorem} \label{multi}
Let ${\cal T} = \{(P_1, B_1),...,(P_r,B_r)\}$ be a set of templates, suppose that for all $i$, $B_i$ has no more than $k$ 0's, and let $\sigma$ be a pattern of length $l>0$. Then, if there exists an $n$ such that $S_{n,\cal T}$ contains a permutation which has $\sigma$ as a pattern, there also exists an $m \le (l-1)(k+1)+1$ such that $S_{m, \cal T}$ also contains a permutation which has $\sigma$ as a pattern.
\end{theorem}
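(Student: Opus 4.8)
The plan is to argue by induction on the structure of the recursive construction, showing that if some large $S_{n,\mathcal{T}}$ contains a permutation with the pattern $\sigma$, then we can "prune" the witnessing decomposition down to a bounded size while still retaining a copy of $\sigma$. First I would fix a permutation $\pi \in S_{n,\mathcal{T}}$ together with a fixed embedding of $\sigma$ into $\pi$ (i.e., $l$ positions of $\pi$ forming a copy of $\sigma$), and a fixed recursive witness: a choice of template $T = (P,B) \in \mathcal{T}$ and a decomposition $\pi = W_1 \cdots W_t$ fitting $T$, and recursively a witness for each $W_i$ of length $\ge 2$. This gives a rooted tree of decompositions whose leaves are singletons; each of the $l$ chosen positions of the $\sigma$-copy lives in exactly one leaf, and I want to bound the size of the smallest subtree-induced permutation that still carries all $l$ marked elements.

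The key observation is that, within a single node of this tree, the $l$ marked elements (or rather, the subwords among $W_1,\dots,W_t$ that contain at least one marked element) are spread across at most $l$ of the subwords, but the $B_i = 1$ subwords that contain no marked element can be deleted wholesale, and a $B_i = 1$ subword that does contain marked elements can be recursively shrunk. The subwords with $B_i = 0$ are already singletons and cost at most $1$ each; since there are at most $k$ of them per template, each level of recursion "spent" on descending into a marked subword contributes at most $k$ extra forced singletons plus the marked structure itself. I would make this precise by defining, for a pattern $\sigma'$ of length $l'$ that is "realized at a node", the quantity $f(l')$ = the minimal length of a permutation in $S_{\cdot,\mathcal{T}}$ realized at that node and still containing $\sigma'$, and prove $f(l') \le (l'-1)(k+1)+1$ by induction on $l'$: splitting the $l'$ marked elements among the active subwords as $l' = l_1 + \cdots + l_s$ (with $s \le l'$, and at most $k$ of the active subwords forced to be singletons, contributing length $l_j = 1$), the zero-block singletons among the active ones contribute their count $\le k$, the non-singleton active subwords contribute $\sum f(l_j)$, and the arithmetic $\sum_j [(l_j - 1)(k+1) + 1] + (\text{slack} \le k) $ telescopes to the desired bound because $\sum (l_j - 1) = l' - s$ and there are $s$ additive $+1$'s, with the $\le k$ slack absorbed into the $(l'-1)(k+1)$ term. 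The base case $l' = 1$ is immediate since $\{1\} = S_{1,\mathcal{T}}$.

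The main obstacle I anticipate is getting the bookkeeping of the bound exactly right — in particular, correctly accounting for the interaction between the $B_i = 0$ singletons (which are forced into the shrunken permutation even when they carry no marked element, as long as they lie "between" two marked subwords in the relative-order sense dictated by $P$) and the recursive budget, so that the final constant comes out to $(l-1)(k+1)+1$ and not something slightly larger. One has to be careful that deleting an unmarked $B_i = 1$ subword genuinely leaves a permutation still in $S_{m,\mathcal{T}}$ for the appropriate $m$: this follows because removing an entire $W_i$ and renormalizing preserves the defining order conditions among the remaining subwords (the condition "$p_i > p_j \Rightarrow$ every element of $W_i$ exceeds every element of $W_j$" is hereditary under deleting blocks, after restricting $P$ to the surviving indices and taking its pattern), but it should be stated as an explicit closure lemma. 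Once that closure property and the inductive inequality for $f$ are in place, the theorem follows by applying them at the root, and Theorem~\ref{single} is the special case $r = 1$.
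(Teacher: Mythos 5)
Your overall strategy is the same as the paper's: induct on the pattern length, split a chosen occurrence of $\sigma$ across the subwords $W_1,\dots,W_t$ of a decomposition fitting some $T\in\mathcal{T}$, shrink each marked subword via the induction hypothesis, pay at most $k$ for the forced singletons at the positions with $B_i=0$, and close the arithmetic. The arithmetic itself checks out: $\sum_j\bigl((l_j-1)(k+1)+1\bigr)+k=(l'-s)(k+1)+s+k$, and this exceeds $(l'-1)(k+1)+1$ by exactly $k(2-s)$, so the bound closes precisely because $s\ge 2$.

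That, however, is where the gap is: you never guarantee, nor even mention, that the occurrence splits across at least two subwords. If all $l'$ marked elements land in a single subword, your recursive call is on a piece carrying the same $l'$ marked elements, so the induction on $l'$ does not advance; worse, your accounting sentence (``each level of recursion spent on descending into a marked subword contributes at most $k$ extra forced singletons'') would then add $k$ at every such level, and since the witness tree can be arbitrarily deep before the marked elements ever split, the bound $(l-1)(k+1)+1$ would be destroyed. The paper handles exactly this case first: when only one $U_i$ is nonempty it discards the \emph{entire} rest of the node --- including the zero-block singletons, paying nothing --- and replaces $\pi'$ by $W'_i$, which is itself a strictly shorter member of $S_{\cdot,\mathcal{T}}$, repeating until either at least two $U_i$ are nonempty or $|\pi'|\le(l-1)(k+1)+1$ already holds. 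You need this descent step (equivalently, pass to the minimal node of your witness tree containing all $l$ marked positions) stated explicitly before the $f(l')$ recursion is valid. A smaller point: your proposed closure lemma is more machinery than needed. One does not delete blocks and re-pattern $P$; an unmarked subword at a position with $B_i=1$ is simply replaced by the empty word, which the definition of fitting a template already permits, and the shrunken permutation is assembled by concatenating the recursively obtained $W_i$ after shifting their values to respect the order constraints of $P$, exactly as in the paper.
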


\begin{proof}
Fix $k$ and $n$; we proceed by induction on $l$. If $l=1$, then $\sigma$ is the pattern 1 and is contained in the permutation 1 which is the element of $S_{1,\cal T}$. Assume that the theorem holds for patterns of length up to $l-1$. Now let $\pi' \in S_{n, \cal T}$ be the permutation which contains $\sigma$ as a pattern, and pick some occurrence of $\sigma$ in $\pi'$. We can choose $T = (P,B) \in \cal T$ and divide $\pi'$ into subwords $W'_1,...,W'_t$ (where $t=|P|$) such that the $W'_i$ fit the template $T$. We can similarly divide $\sigma$ into subwords $U_1, ..., U_t$ so that $U_i$ is the portion of the chosen occurrence of $\sigma$ which lies in $W'_i$. If there exists $i$ such that only $U_i$ is nonempty, then $W'_i$ contains $\sigma$ and is shorter than $\pi'$, so set $\pi'=W'_i$ and repeat the decomposition for the new $\pi'$. Repeat until either at least two $U_i$ are nonempty or $|\pi'| \le (l-1)(k+1)+1$. In the second case we are done, so assume that the first case holds.

We will now find $m$ and construct a permutation $\pi \in S_{m,\cal T}$ which contains $\sigma$. Like $\pi'$ we need to be able to divide $\pi$ into $W_1,...,W_t$ to fit $T$, so we will construct the $W_i$ individually. For each $i$, let $u_i=|U_i|$. By the induction hypothesis, there exist $W_i$ such that $|W_i| =w_i \le (u_i-1)(k+1)+1$, $W_i \in S_{w_i,\cal T}$, and $U_i$ is a pattern in $W_i$. It may be that for some $i$, $W_i$ is empty even though $B_i=0$; if this is the case, we must add up to $k$ new $W_i$ to ensure that each $W_i$ has length 1 whenever $B_i=0$. Lastly, we choose $i$ so that $p_i=1$ and $j$ so that $p_j=2$ and increase every element of $W_j$ by the same amount so that every element of $W_j$ is greater than every element of $W_i$, and we repeat this with $j=3..t$ and $i=j-1$. Now, concatenating all the $W_i$ gives a permutation $\pi$ of length $m$ in $S_{m, \cal T}$ which contains the pattern $\sigma$.

It remains to show that $m \le (l-1)(k+1)+1$. Let $I=\{i : u_i>0\}$; using the construction of $\pi$ and the induction hypothesis, we find that $m \le \sum_{i\in I} ((u_i-1)(k+1)+1) + k = (k+1)(\sum_{i\in I} u_i) - k\cdot |I|+k=(k+1)(l)-k|I|+k \le (k+1)(l-1)+1$ because we found at the end of the first paragraph that $|I| \ge 2$. Therefore, the proof is complete by induction.
\end{proof}

\ref{multi} can give lower bounds on the sizes of many sets of avoidance classes. As an example, we offer the following proposition:

\begin{prop}
Let $Q_n$ be the set of all permutations of $n$ which avoid every element of $\{2341, 2413, 2431, 3241\}$ and let $q_n=|Q_n|$. Then, if the sequence $(s_n)_{n=0}^\infty$ is defined by $s_0=s_1=1$, and $s_n= \sum_{i=1}^{n-1}\sum_{j=i+1}^n 2 \cdot s_{i-1}s_{j-i-1}s_{n-j}$ for $n>1$, it holds that $q_n \ge s_n$ for all $n$.
\end{prop}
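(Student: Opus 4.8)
The plan is to mimic the structure of the earlier proposition about $\{2143,2413,3142\}$: exhibit an explicit set of templates $\mathcal{T}$ whose associated permutations $S_{n,\mathcal{T}}$ all lie inside $Q_n$, and then check that $|S_{n,\mathcal{T}}|$ satisfies the stated recurrence for $(s_n)$. The factor of $2$ in the recurrence strongly suggests taking $\mathcal{T}=\{T_1,T_2\}$ with two templates of length $5$, each having $B=(1,0,1,0,1)$ (so $k=2$, three subwords of variable length $W_1,W_3,W_5$ and two singletons $W_2,W_4$), and with $P_1,P_2$ two different permutations of $\{1,\dots,5\}$ that place the roles of ``largest'' and ``smallest'' singletons appropriately. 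A natural guess is to have the two singletons be the maximum $n$ and the minimum $1$, arranged so that the template forbids the four patterns $2341,2413,2431,3241$; concretely something like $P_1 = (3,5,2,1,4)$ and $P_2 = (4,5,2,1,3)$ (to be pinned down by direct inspection), where in each case $W_2$ is the max, $W_4$ is the min, and the relative order of the three blocks $W_1,W_3,W_5$ differs between the two templates, giving the ``2 choices.''

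First I would verify the inclusion $S_{n,\mathcal{T}}\subseteq Q_n$. By Theorem~\ref{multi} with $l=4$ and $k=2$, it suffices to check that none of $S_{m,\mathcal{T}}$ for $m\le (4-1)(2+1)+1 = 10$ contains any of $2341,2413,2431,3241$; this is a finite computation a computer can do directly (indeed the paper says a laptop did exactly this kind of check in seconds via Maple). For the human-readable version, I would argue as in the earlier proof: any occurrence of a length-$4$ pattern in a $\pi\in S_{n,\mathcal{T}}$ that is not confined to a single variable block $W_1,W_3,W_5$ must, because $W_2$ and $W_4$ are forced singletons (max and min), have at most the structure allowed by $P_1$ or $P_2$ on the blocks, and one checks case-by-case that each of the four forbidden patterns is impossible; patterns confined to a single block are excluded by minimality of $n$ as in the earlier proposition (a sub-block containing the pattern is a shorter element of $S$ containing it). This case analysis is the main obstacle — it is the ``headache'' the authors explicitly warn about — but it is purely mechanical and is precisely what Theorem~\ref{multi} lets us offload to a finite check.

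Second I would compute $|S_{n,\mathcal{T}}|$. With both templates having $B=(1,0,1,0,1)$, a permutation $\pi$ of length $n$ in $S_{n,\mathcal{T}}$ is built by choosing one of the two templates, placing the max at the position dictated by $P$ (say position $i$ after $W_1$, so $|W_1|=i-1$) and the min at the position dictated by $P$ (say after $W_3$, so if the min sits at position $j$ then $|W_3| = j-i-1$ and $|W_5| = n-j$), with $1\le i \le n-1$ and $i+1 \le j \le n$, and then independently choosing $W_1\in S_{i-1,\mathcal{T}}$, $W_3\in S_{j-i-1,\mathcal{T}}$, $W_5\in S_{n-j,\mathcal{T}}$. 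Since the two templates differ only in the relative ordering of the three variable blocks (not in which positions hold max/min), the two choices never produce the same permutation, so the count picks up a factor of $2$: $|S_{n,\mathcal{T}}| = \sum_{i=1}^{n-1}\sum_{j=i+1}^{n} 2\cdot |S_{i-1,\mathcal{T}}|\,|S_{j-i-1,\mathcal{T}}|\,|S_{n-j,\mathcal{T}}|$ for $n>1$, with $|S_{0,\mathcal{T}}| = |S_{1,\mathcal{T}}| = 1$.

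Finally I would combine the two halves: since $|S_{n,\mathcal{T}}|$ satisfies the same initial conditions and recurrence as $(s_n)$, we get $|S_{n,\mathcal{T}}| = s_n$ for all $n$, and since $S_{n,\mathcal{T}}\subseteq Q_n$, we conclude $q_n = |Q_n| \ge |S_{n,\mathcal{T}}| = s_n$. The one point needing care — beyond the mechanical pattern-avoidance check — is confirming the two templates genuinely produce disjoint permutation sets for every choice of block sizes (so that the factor $2$ is exact and not an overcount); this follows as long as the relative order of $W_1,W_3,W_5$ imposed by $P_1$ differs from that imposed by $P_2$ on at least one pair of blocks whenever all three are nonempty, and one handles the degenerate cases (some block empty) by checking the chosen $P_1,P_2$ still differ there or by noting the $j,i$ ranges already separate them.
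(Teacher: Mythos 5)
Your overall strategy is exactly the paper's: exhibit two length-$5$ templates with $B=10101$, use Theorem~\ref{multi} to reduce the avoidance claim to a finite check for $m\le(4-1)(2+1)+1=10$, and then show $|S_{n,\mathcal{T}}|$ satisfies the recurrence for $(s_n)$, with the factor of $2$ coming from the choice of template and a disjointness argument. However, there is a genuine gap at the one place where the proof cannot be waved at: the explicit templates. Your structural guess --- singletons equal to the maximum and the minimum, with the three variable blocks permuted differently in the two templates --- does not work. Concretely, your proposed $P_1=35214$ is itself a permutation fitting your $T_1$ (all five subwords singletons), so $35214\in S_{5,\mathcal{T}}$, and the subsequence $3,5,2,4$ is order isomorphic to $2413$, which is one of the forbidden patterns. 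The finite check you propose to offload to a computer would therefore fail at $m=5$, and the same problem recurs for other natural choices with the minimum as the middle singleton (e.g.\ $25314$ contains $2431$ and $2413$). Since the entire proof rests on producing templates that pass the check, ``to be pinned down by direct inspection'' is precisely the missing content.

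The paper's templates are $T_1=(14253,10101)$ and $T_2=(15243,10101)$: the two singletons are $n-1$ and $n$ (the two \emph{largest} values, not the max and the min), and the three variable blocks satisfy $W_1<W_3<W_5$ in \emph{both} templates; the only difference between $T_1$ and $T_2$ is whether $n-1$ precedes $n$ or follows it. This also yields a cleaner disjointness argument than the one you sketch: a permutation in $S_{n,\mathcal{T}}$ follows $T_1$ or $T_2$ according to whether $n-1$ appears before or after $n$, so the two templates can never produce the same permutation and no degenerate cases (empty blocks) need separate treatment. With those templates in place, your counting paragraph and the final combination of the two halves go through exactly as in the paper, with $i$ the position of the first singleton and $j$ the position of the second.
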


\begin{proof}
Let $T_1=(14253, 10101), T_2=(15243,10101),$ and ${\cal T}=\{T_1,T_2\}$. Using Maple, one can generate $S_{n,\cal T}$ for $1 \le n \le 10$, and confirm that every permutation in each of these sets avoids 2341, 2413, 2431, and 3241 (we did this on a laptop in less than 7 minutes). Because these patterns all have length 4, both $B_1$ and $B_2$ have two 0's, and $(4-1)\cdot (2+1)+1=10$, Theorem \ref{multi} promises that, for all $n$, every permutation in $S_{n,\cal T}$ avoids 2341, 2413, 2431, and 3241.

Now we show that $|S_{n, \cal T}| = s_n$ by induction. Certainly $|S_{0, \cal T}|=|S_{1, \cal T}|=1$. When picking a permutation in $S_{n, \cal T}$, we first choose whether this permutation will follow the template $T_1$ or $T_2$. This will not cause us to count any permutation twice because if a permutation has $n-1$ appear before $n$, then it can only follow $T_1$, and if it has $n$ appear before $n-1$ then it can only follow $T_2$. Now, for $T_1$, we must choose the location of $n-1$, call this $i$, and the location of $n$, call it $j$. For $T_2$, we will call the location of $n$ $i$ and the location of $n-1$ $j$. For either template, we have $1 \le i \le n-1$, $i+1 \le j \le n$. Once $i$ and $j$ are chosen, we can fill in the portion of the permutation before position $i$ in any of $s_{i-1}$ ways, the portion between positions $i$ and $j$ in $s_{j-i-1}$ ways, and the portion following position $j$ in $s_{n-j}$ ways. Therefore, $|S_{n,\cal T}|=\sum_{i=1}^{n-1}\sum_{j=i+1}^n 2 \cdot s_{i-1}s_{j-i-1}s_{n-j} = s_n$ for  $n >1$.
\end{proof}

\vspace{.75pc}

\section{Exhaustive Experimental Results for 4 Patterns of Length 4}

\vspace{.75pc}

In the study of permutation pattern avoidance there has been some interest in both enumerating specific classes avoiding relatively small sets of patterns, as well as determining the number of Wilf-equivalent classes on pattern sets of a particular form. Examples of such endeavors include Mikl\'os B\'ona's work enumerating the avoidance class of the pattern $\{1342\}$\cite{CV1}, the well known Erd\H{o}s-Szekeres theorem\cite{CV2} which proves the finiteness of classes avoiding the pattern set $\{12\ldots m,n\ldots 21\}$ for all $m,n\in\mathbb{N}$, or the fact that there are only three Wilf-equivalent avoidance classes for singleton sets of patterns of length 4, which can be derived from the work of B\'ona\cite{CV1} and Gessel\cite{CV3}, coupled with 
the so-called {\it West Equivalence} \cite{We} proved by Julian West.

Some of this work can be aided by experimental mathematics, particularly when searching for avoidance classes which might be enumerable by a specific archetype or when searching for the number of Wilf-equivalent classes. Using a small handful of Maple scripts, we computed the number of symmetry classes (collections of pattern sets which give rise to trivially Wilf-equivalent avoidance classes) and a lower bound for the number of Wilf-equivalent classes for sets of 4 patterns of length 4. Amongst these classes we also searched for those which appeared to be enumerable by polynomials, and found a satisfying number of them.

This choice of 4 patterns of length 4 was arbitrary; there is no reason other than computational expense to limit the analysis to small cases of $m$ patterns of length $n$. There is also no reason beyond convenience to only seek those classes which appear to be polynomial in size. The reader interested in looking for other archetypes can add to the code provided with the project, HCRV.txt. Should the need arise, much of the process of computing these bounds can be parallelized, providing a significant speedup if there are cores to spare.

In total for 4 patterns of length 4, 1524 symmetry classes were found, there are at least 1100 Wilf-equivalent classes, and there were 60 such classes that appeared to be enumerable by polynomials of degrees between 4 and 7. Utilizing the maple scripts in VATTER.txt it is possible, at least in principle, to come up with automated proofs for these apparently polynomial classes, though again computational resources are the bottleneck. Provided in the table below, we list some pattern sets $\Sigma$ which seem to give rise to polynomially growing $|Av_n(\Sigma)|$, several terms of the sequences $|Av_n(\Sigma)|$, and the degrees of the conjectured polynomials (which may be reconstructed via interpolation).

\newpage

\begin{table}[ht]
	\caption{}\label{eqtable}
	\renewcommand\arraystretch{1.5}
	\noindent\[
	\begin{array}{|c|c|c|}
	\hline
	\Sigma & |Av_n(\Sigma)| & \text{deg}(P(n)) \\
	\hline
	\{1234,1243,1342,4231\}& 1,2,6,20,64,187,492,1170,2543,5116 & 6 \\
	\hline
	\{1234,1243,1432,3412\}& 1, 2, 6, 20, 59, 148, 324, 638, 1157, 1966 & 5 \\
	\hline
	\{1234,1243,2341,4231\}& 1, 2, 6, 20, 64, 184, 469, 1072, 2235, 4318 & 6 \\
	\hline
	\{1234,1243,3241,3412\}& 1, 2, 6, 20, 58, 141, 297, 561, 975, 1588 & 4 \\
	\hline
	\{1234,1324,2413,4231\}& 1, 2, 6, 20, 60, 159, 379, 827, 1675, 3184 & 6 \\
	\hline
	\{1234,1342,1423,3421\}& 1, 2, 6, 20, 64, 182, 459, 1045, 2187, 4270 & 7 \\
	\hline
	\end{array}
	\]
\end{table}

\section{Small Experiments for 12 Patterns of Length 4}

\vspace{.75pc}

This is implemented in the Maple package SmallExp.txt available from the webpage of this article (see Section 6).

The procedure $Ask(num,N,n)$ generated a random set $S$ of size $num$ consisting 
of permutations of length 4, then outputted the sequence $(f(n))_{n=1..N}$, 
where $f(n)$ is the number of permutations of length n that avoid all 
elements of $S$ as subpermutations.  The program $Receive(num,N,T,n)$ ran 
$Ask(num,N,n)$ $T$ times and compiled the results, taking particular note of 
whether the resulting sequence went to $0$ or matched a polynomial of some 
degree past a certain point.  We ran the program for a total of 820 cases 
of $Aha(12,13,n)$.

\begin{itemize}
\item 45+31+63+52=191 (23.3\%) were  ultimately zero.
\item 66+38+78+85=267 (32.6\%) were  ultimately constant.
\item 61+33+81+83=258 (31.5\%) were  ultimately a degree one polynomial in $n$.
\item 16+13+20+17=66 (8.0\%) were  ultimately a degree two polynomial in $n$.
\item 3 were ultimately cubic function.
\item 33 (4.0\%) did not evidently approach a polynomial within 13 steps.
\end{itemize}

The total runtime for all of these examples, run for sequences of up 
to length 13, was approximately 15 hours, for an average of 60-70 
seconds per example.  This time was not spread out evenly for each 
example; prior experimentation (and the recursive nature of the 
program) indicate that super-polynomial sequences take much longer to 
compute than polynomial (especially constant) ones.  Experimentation 
also indicated that runtime increases dramatically as $N$ increases, 
particularly for the exponential sequences; this makes longer sequences 
impractical to derive via this method.  In any case, the vast majority 
of sequences we derived approached a polynomial of degree 2 or less, 
and the ones that did not approach a polynomial were inspected by hand 
and appeared to follow an exponential function, so for the case of $num=12$, 
longer sequences may not unveil substantially more information anyway. 

One thing that was noticed was that, of the non-polynomial sequences 
generated by $Ask$, a majority (21/32 observed directly) obeyed a simple 
recurrence relation $f(n)=f(n-1)+f(n-2)+l(n)$, where $l(n)$ is a linear or 
constant function, past a given threshold. (As an example, [1, 2, 6, 
12, 18, 26, 39, 60, 94, 149, 238, 382, 615] obeys the relationship 
$f(n)=f(n-1)+f(n-2)-5$ for $n$ at least 6.)  
It would be interesting to study
the exact frequency of this sort of recurrence.

\vspace{.75pc}

\section{Using Zeilberger's Maple package VATTER.txt  on Heterogeneous Pattern Sets}

\vspace{.75pc}

In \cite{V}, Vince Vatter 
describes a method to discover enumeration schemes for permutation classes using a computer.  
This was implemented and somewhat extended in \cite{Z}
This method is implemented in the package \verb+VATTER.txt+, available from the paper's website.

The \verb+VATTER+ procedures initially relevant to our work are \verb+SchemeFast+ and \verb+SeqS+.  The former looks for an enumeration scheme of the class of permutations avoiding a certain set of patterns; the latter computes, for any desired positive integer $K$,
the first $K$ terms corresponding to a given enumeration scheme.  The details of the scheme can be found in \cite{Z}; the important point is that it gives a \emph{polynomial time} algorithm to enumerate the permutation class.

We wrote a new Maple package, VATTERPLUS.txt (available from the webpage), that we will now describe.

Our procedure \verb+VatterR3S4(r,s,K,Gvul,GvulGap)+  applies \verb+SchemeFast+ to $K$ sets of random permutations, where each set contains $r$ permutations of length 3 and $s$ permutations of length 4.  (\verb+Gvul+ and \verb+GvulGap+ are optional search parameters for \verb+SchemeFast+, set to 4 and 2 by default.)  The output is a list of theorems of the form ``The enumeration scheme of permutations avoiding \underline{\hspace{1cm}} is \underline{\hspace{1cm}}.''

For example, you can try \verb+VatterR3S4(3,2,4);+ to see some theorems about permutations avoiding $3$ patterns of length 3 and 2 patterns of length 4.  For more theorems, see the data files 
\verb+v12.txt+, \verb+v21.txt+, \verb+v22.txt+, \verb+v23.txt+, \verb+v32.txt+, available from the webpage of this class
(see Section 6).

Simply adapting our \verb+VatterR3S4+ procedure to \verb+VatterR4S5+ to work for patterns of length 4 and 5 did not yield much success. The reason for this will show itself momentarily.

The next avenue was to use procedure
\verb+SipurF+ from the Maple package (available from the webpage) \verb+VATTER.txt+. 
This uses \verb+SchemeFast+ on ALL equivalence classes of patterns to avoid; i.e. \verb+{[1,2,3,4],[1,3,5,2,4]}+ and \\ \verb+{[4,3,2,1],[4,2,5,3,1]}+ are equivalent by the reversing bijection. This means most of the theorems found in accompanying files can be trivially enlarged. Applying \verb+SipurF([3,4],4,2,10,20,n,N,x,3,2)+ enumerates 17/18 equivalence classes for avoiding 1 pattern of length 3 and 1 pattern of length 4 (that are independent). In total 76/78 of the pairs have schemes found. However, applying \\ \verb+SipurF([4,5],4,2,10,20,n,N,x,3,2)+ only manages to find schemes for 11 of the 369 different equivalence classes. And only 1 had a scheme of depth 3. So attempting random pairs, as \verb+VatterR4S5+ does, usually won't work. Empirically, increasing the depth search to \verb+Gvul=5+ improves the odds of finding a scheme from $\frac{2}{400}=0.5\%$ to $\frac{4}{100}=4\%$ but each iteration takes roughly 10 times longer. And it is important to note that \cite{Z} recognizes there are cases that will NEVER find a scheme, no matter the depth.

For more enumerating schemes, see the accompanying data files \verb+v45.txt+, \\
\verb+v445.txt+, \verb+v455.txt+, \verb+v4445.txt+, \verb+v4555.txt+, \verb+v4455.txt+. For a single theorem, it is probably best to attempt multiple iterations with \verb+Gvul=4+ and \verb+GvulGap=2+. If you want to improve chances for new theorems, you will eventually have to increase \verb+Gvul+.
It may be that increasing the number of 4 patterns in the set will increase your chance of finding a scheme. This could be a result of the fewer number of permutations in general. 

\vspace{.75pc}

\section{Maple Code and Extensive Output Files}

\vspace{.75pc}

See the front of this article 

{\tt http://www.math.rutgers.edu/\~{}zeilberg/mamarim/mamarimhtml/pamp.html}.

\vspace{.75pc}

\bibliographystyle{amsplain}

\end{document}